\theoremstyle{plain}
\newtheorem{lemma}{Lemma}[section]
\newtheorem{theorem}[lemma]{Theorem}
\theoremstyle{definition}
\numberwithin{equation}{section}
\begin{document}

\newcommand{\ZZ}{\mathbb{Z}}
\newcommand{\ZZd}{\mathbb{Z}^{d}}
\newcommand{\RR}{\mathbb{R}}
\newcommand{\RRd}{\mathbb{R}^{d}}
\newcommand{\PP}{\mathbb{P}}
\newcommand{\QQ}{\mathbb{Q}}
\newcommand{\EE}{\mathbb{E}}
\newcommand{\BB}{\mathbb{B}}
\newcommand{\mA}{\mathcal{A}}
\newcommand{\mB}{\mathcal{B}}
\newcommand{\mC}{\mathcal{C}}
\newcommand{\mD}{\mathcal{D}}
\newcommand{\mE}{\mathcal{E}}
\newcommand{\mF}{\mathcal{F}}
\newcommand{\mG}{\mathcal{G}}
\newcommand{\mI}{\mathcal{I}}
\newcommand{\mJ}{\mathcal{J}}
\newcommand{\mL}{\mathcal{L}}
\newcommand{\mkN}{\mathfrak{N}}
\newcommand{\mR}{\mathcal{R}}
\newcommand{\mS}{\mathcal{S}}
\newcommand{\mT}{\mathcal{T}}
\newcommand{\mU}{\mathcal{U}}
\newcommand{\mV}{\mathcal{V}}
\newcommand{\mW}{\mathcal{W}}
\newcommand{\bks}{\backslash}
\newcommand{\half}{\frac{1}{2}}
\newcommand{\fN}{\frac{1}{N}}
\newcommand{\tZ}{\tilde{Z}}
\newcommand{\tJ}{J_1}
\newcommand{\tK}{\tilde{K}}
\newcommand{\tf}{\tilde{f}}
\newcommand{\tE}{\tilde{E}}
\newcommand{\tL}{\tilde{L}}
\newcommand{\tP}{\tilde{P}}
\newcommand{\tS}{\tilde{\tau}}
\newcommand{\tG}{\tilde{\Gamma}}
\newcommand{\hd}{\hat{\delta}}
\newcommand{\hP}{\hat{P}}
\newcommand{\hQ}{\hat{Q}}
\newcommand{\hZ}{\hat{Z}}
\newcommand{\hH}{\hat{H}}
\newcommand{\hm}{\hat{\mu}}
\newcommand{\ophi}{\overline{\varphi}}
\newcommand{\olm}{\overline{m}}
\newcommand{\tphi}{\tilde{\varphi}}
\newcommand{\oF}{\overline{F}}
\newcommand{\bx}{\mathbf{x}}
\newcommand{\bV}{\mathbf{V}}
\newcommand{\by}{\mathbf{y}}
\newcommand{\bs}{\mathbf{s}}
\newcommand{\bS}{\mathbf{S}}
\newcommand{\bbL}{\mathbb{L}}
\newcommand{\ep}{\epsilon}
\newcommand{\hp}{\hat{\varphi}}

\title[Subgaussian Rates of Convergence in FPP]
{Subgaussian Rates of Convergence of Means in Directed First Passage Percolation}
\author{Kenneth S. Alexander}
\address{Department of Mathematics KAP 108\\
University of Southern California\\
Los Angeles, CA  90089-2532 USA}
\email{alexandr@usc.edu}
\thanks{Research supported by NSF grant DMS-0804934.}

\keywords{first passage percolation, time constant, subadditivity}
\subjclass[2010]{Primary 60K35}
\date{\today}

\begin{abstract}
We consider directed first passage percolation on the integer lattice, with time constant $\mu$ and passage time $a_{0n}$ from the origin to $(n,0,\dots,0)$.  It is shown that under certain conditions on the passage time distribution, $Ea_{0n} - n\mu = O(n^{1/2}(\log\log n)/\log n)$.
\end{abstract}
\maketitle

\section{Introduction} \label{S:intro}
The question of the order of the fluctuations (i.e. the standard deviation) for the passage time over a distance $n$ in first passage percolation (FPP) has attracted considerable interest.  FPP is believed to be one of a family of models in which these fluctuations are of order $n^{1/3}$ in two dimensions, with a Tracy-Widom distributional limit under corresponding scaling.  In three dimensions the conjectured order is $n^\chi$ with $\chi$ near 1/4.  Little is known rigorously in this regard, however, and even physicists do not agree on the behavior of the exponent $\chi$ as the dimension becomes large.  For directed last passage percolation in two dimensions with geometrically distributed passage times, the $n^{1/3}$ scale and the Tracy-Widom distributional limit were proved in \cite{Jo01}, but the ``exact solution'' methods used there do not seem conducive to use with more general passage time distributions.  (A more probabilitistic proof for geometrically distributed passage times is in \cite{CG06}.)  For longest increasing subsequences, which can be transformed into an FPP-like problem, analogous results were established in \cite{BDJ99}.  For first passage percolation, fluctuations are known to be subgaussian, but barely---Benjamini, Kalai and Schramm \cite{BKS03} showed that for Bernoulli-distributed passage times, the fluctuations are $O((n/\log n)^{1/2})$, and Bena\"im and Rossignol \cite{BR08} proved exponential bounds, on the scale $(n/\log n)^{1/2}$, for the fluctuations under more general passage time distributions, improving Kesten's exponential bounds \cite{Ke93} which are on scale $n^{1/2}$.

Here by an exponential bound on scale $c_n$ we mean that, letting $a_{0n}$ be the passage time from the origin to $(n,0,...,0)$, one has $P(|a_{0n} - Ea_{0n}| > tc_n)$ decaying exponentially in $t$, uniformly in $n$.

Besides deviations from the mean, it is of interest to understand deviations from the asymptotic value $n\mu$, where $\mu$ is the time constant.  This means that, letting $a_{0n}$ be the passage time from the origin to $(n,0,...,0)$, we need bounds for $Ea_{0n} - n\mu$.  Subadditivity of $Ea_{0n}$ ensures that this difference is nonnegative.  The best known bound, in \cite{Al93}, is of order $n^{1/2}\log n$, obtained with the help of Kesten's scale-$n^{1/2}$ exponential bounds on fluctuations.  In \cite{Al97}, analogous results are established for a wider class of models, and the proofs there show that for FPP in two or three dimensions, if one has an exponential bound on some scale $c_n$, then 
\begin{equation} \label{cnbound}
  Ea_{0n} - n\mu = O(c_n\log n).
  \end{equation}
For the proven scale $c_n = (n/\log n)^{1/2}$ from \cite{BR08}, though, this approach obviously does not lead to a subgaussian (i.e. $o(n^{1/2})$) bound on $Ea_{0n} - n\mu$.  Our aim here is to establish such a subgaussian bound by other methods, showing that in the directed case, for a reasonably wide class of passage time distributions, we have
\begin{equation} \label{rate}
  Ea_{0n} - n\mu = O\left( \frac{n^{1/2}\log \log n}{(\log n)^{1/2}} \right).
  \end{equation}
  
To formalize things, we write sites of $\ZZ^{d+1}$ as $(n,x)$ with $n \in \ZZ, x \in \ZZ^d$.  Let $\bbL^{d+1}$ be the even sublattice of $\ZZ^{d+1}$:
\[
  \bbL^{d+1} = \{(n,x) \in \ZZ^{d+1}: n+x_1 + \dots + x_d \text{ is even}\}.
  \]
Sites $(n,x)$ and $(n+1,y)$ in $\bbL^{d+1}$ are \emph{adjacent} if the Euclidean distance $|y-x|_2=1$.  A \emph{bond} $\langle (n,x),(n+1,y) \rangle$ is a pair of adjacent sites, and $\BB_{d+1}$ denotes the set of all bonds in $\bbL^{d+1}$.  We assign i.i.d.~nonnegative \emph{passage times} $\{\omega_b: b \in \BB_{d+1}\}$ to the bonds in $\BB_{d+1}$, with distribution $\nu$.  A \emph{(directed lattice) path} from $(n,x)$ to $(n+m,y)$ in $\bbL^{d+1}$ is a sequence of sites $((n,x^{(0)}),(n+1,x^{(1)}),\dots,(n+m,x^{(m)}))$ in which consecutive sites are adjacent, or it may be viewed as the corresponding sequence of bonds; which one should be clear from the context.  The \emph{passage time} for a path $\gamma$ is 
\[
  T(\gamma) = \sum_{b \in \gamma} \omega_b.
  \]
For sites $(n,x),(n+m,y)$ in $\bbL^{d+1}$, we then define
\[
  T((n,x),(n+m,y)) = \min\{T(\gamma):  \gamma \text{ is a lattice path from $(n,x)$ to } (n+m,y)\}
  \]
and for $n$ even,
\[
  a_{0n} = T((0,0),(n,0)).
  \]
A \emph{geodesic} is a path which achieves this minimum; the geodesic is a.s.~unique when $\omega_b$ has a continuous distribution.  The \emph{time constant} $\mu=\mu(\nu,d)$ is 
\begin{equation} \label{mudef}
  \mu = \lim_n \frac{Ea_{0n}}{n} = \inf_n \frac{Ea_{0n}}{n} = \lim_n \frac{a_{0n}}{n}\ \text{a.s.},
  \end{equation}
the existence of the first limit (taken through even $n$) being a consequence of subadditivity and the second being a consequence of Kingman's subadditive ergodic theorem \cite{Ki68}.  

In order to use the exponential bound of \cite{BR08}, we need to assume that $\nu$ is a nearly gamma distribution, as defined in \cite{BR08}.   To state the definition, let $F$ and $\Phi$ be the d.f.'s of $\nu$ and the standard normal distribution, respectively.   Assume $\nu$ has a density $f$, and let $\varphi$ be the density of the standard normal.  Define $I = \{t \geq 0: f(t)>0\}$ and on $I$ define the function
\[
  \Upsilon(y) = \frac{ \varphi \circ \Phi^{-1}(F(y)) }{ f(y) }.
  \]
The distribution $\nu$ is said to be \emph{nearly gamma} if $I$ is an interval, $f$ restricted to $I$ is continuous, and for some positive $A$,
\[
  \Upsilon(y) \leq A\sqrt{y} \quad \text{ for all } y \in I.
  \]
For a standard normal variable $\xi$, $F^{-1}(\Phi(\xi))$ has distribution $\nu$, and the nearly gamma property ensures that the map $F^{-1} \circ \Phi$ is nice enough that a log Sobolev inequality for $\xi$ translates into useful information about $\nu$; see \cite{BR08}.  Most common continuous distributions are nearly gamma---a sufficient condition for the property, from \cite{BR08}, is as follows.  Let $a<b$ be the (possibly infinite) endpoints of $I$.  Suppose that for some $\alpha>-1$,
\[
  \frac{f(x)}{(x-a)^\alpha} \quad\text{stays bounded away from 0 and $\infty$ as } x \searrow a,
  \]
and either (i) $b<\infty$ and for some $\beta>-1$,
\[
  \frac{f(x)}{(b-x)^\beta} \quad\text{stays bounded away from 0 and $\infty$ as } x \nearrow b,
  \]
or (ii) $b=\infty$ and 
\[
  \frac{f(x)}{ \int_x^\infty f(u)\ du} \quad\text{stays bounded away from 0 and $\infty$ as } x \nearrow \infty.
  \]
Then $\nu$ is nearly gamma.  

We can now state our main result.  

\begin{theorem}\label{main}
  Suppose $\nu$ is nearly gamma and $\int e^{tx}\ \nu(dx) < \infty$ for some $t>0$.  Then for even $n$,
  \begin{equation} \label{rate2}
  Ea_{0n} - n\mu = O\left( \frac{n^{1/2}\log \log n}{(\log n)^{1/2}} \right).
  \end{equation}
\end{theorem}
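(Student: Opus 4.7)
My plan is to refine the strategy of \cite{Al93, Al97}, which combines the exponential concentration of $a_{0n}$ around $Ea_{0n}$ with subadditivity of $Ea_{0n}$ to control $\Delta_n := Ea_{0n} - n\mu$. Applying that scheme directly with the \cite{BR08} scale $c_n = (n/\log n)^{1/2}$ gives only $\Delta_n = O(c_n \log n) = O((n\log n)^{1/2})$, which is not subgaussian, so the proof must extract an extra factor of $\log n/\log\log n$ improvement beyond \cite{Al97}.

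First, I invoke \cite{BR08}: the nearly-gamma and exponential-moment hypotheses give
\begin{equation*}
P\bigl(|a_{0n} - Ea_{0n}| \geq tc_n\bigr) \leq Ce^{-ct}
\end{equation*}
for some $C,c>0$, uniformly in $n$. Second, for an intermediate scale $m$ and $n = Nm + r$, $0 \leq r < m$, subadditivity gives $a_{0n} \leq \sum_{j=0}^{N-1} a^{(j)}_m + a^{(N)}_r$ with i.i.d.\ $a^{(j)}_m \sim a_{0m}$, so taking expectations yields the crude bound $\Delta_n \leq (n/m)\Delta_m + O(m)$.

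The heart of the argument is a single-scale improvement showing $Ea_{0n} \leq NEa_{0m} - G(n,m)$ for a strictly positive gain $G(n,m)$ quantifying the slack in the subadditive inequality. The origin of this slack is that the geodesic from $(0,0)$ to $(n,0)$ is not constrained to pass through the checkpoints $(jm,0)$; by avoiding them, it can shortcut any block whose passage time happens to exceed $Ea_{0m}$ by an amount comparable to $c_m$. Exponential concentration of the $a^{(j)}_m$ lets one quantify the gain, and the target estimate is $G(n,m) \gtrsim (n/m)^{1/2} c_m$ or better. Applying this improvement iteratively across dyadic intermediate scales (or, equivalently, optimizing $m$ as a function of $n$) then yields the target $\Delta_n = O(c_n \log\log n)$; the $\log\log n$ factor reflects the number of dyadic scales contributing non-trivially, in place of the full union-bound loss of $\log n$ paid in \cite{Al97}.

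The main obstacle is the single-scale improvement itself: rigorously producing $G(n,m)$ requires controlling the transverse wandering of the geodesic at intermediate levels $jm$ and comparing off-axis passage times $ET((0,0),(m,y))$ to $Ea_{0m}$ for $y$ on the typical transverse scale $\sqrt{m}$. A secondary technical point is that the \cite{BR08} concentration is stated for $a_{0n}$, so transferring it to off-axis passage times, or reworking the argument so that only the on-axis bound is needed, may itself require effort. Ensuring that the accumulated gain across scales is of order $c_n \log\log n$ rather than a worse logarithmic power depends delicately on the $(m/\log m)^{1/2}$ form of the concentration scale provided by \cite{BR08}.
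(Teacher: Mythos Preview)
Your proposal has a genuine gap at its central step. The ``single-scale improvement'' $Ea_{0n}\le N Ea_{0m}-G(n,m)$ with $G(n,m)\gtrsim (n/m)^{1/2}c_m$ is never established, and the heuristic you give for it does not carry: the off-axis means $ET((0,0),(m,y))$ are \emph{larger} than $Ea_{0m}$, so the geodesic does not ``shortcut bad blocks'' in expectation by wandering off the axis. Whatever gain comes from optimizing random passage times rather than means is a fluctuation effect, and turning it into a quantitative $G(n,m)$ of the required size would demand precisely the control over off-axis means $s(m,y)=ET((0,0),(m,y))-m\mu$ and over transverse wandering that you list as an unresolved obstacle. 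You have correctly located the difficulty, but you have no mechanism to overcome it; the dyadic iteration therefore has nothing concrete to iterate.

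The paper takes a completely different route, which your outline does not anticipate. The bottleneck in the \cite{Al97} scheme is \emph{entropy}: one sums a bound $e^{-r_nk}$ over the $\sim n^{dk}$ possible skeletons $(x^{(jn)})_{0\le j\le k}$ of a length-$kn$ path, and the sum is small only if $r_n\gtrsim\log n$, which forces the $c_n\log n$ rate. The paper's idea is entropy reduction: coarse-grain the skeleton points onto a grid of mesh $u_n\approx h_n/(\log n)^2$ in each hyperplane $H_{jn}$, so that only $\sim(\log n)^{O(dk)}$ coarse-grained skeletons need be summed over; this replaces $\log n$ by $\log\log n$ in the union bound. The price is that the geodesic must be rerouted near each $H_{jn}$ to hit coarse-grained points, and the cost of this rerouting is controlled by a cut-and-paste lemma (Lemma~\ref{cutpaste}) which, via symmetry and subadditivity of $s(m,x)$, constructs short ``clean'' paths to grid points with excess at most $O(n^{1/2}/\psi(n))$. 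No multi-scale recursion and no slack-in-subadditivity gain is used; the argument is a single scale $n$ with $k\to\infty$, finished by a contradiction showing some $s(n,x)$ must be at most $O(n^{1/2}/\psi(n))$.
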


This bound is likely not sharp---an exponential bound on the expected scale $n^{1/3}$ would lead to \eqref{cnbound} with $c_n=n^{1/3}$, for example.

\section{Proof of Theorem \ref{main}} \label{S:proof}
We consider paths $\{(i,x^{(i)}):0\leq i \leq kn\}$ from $(0,0)$ to $(kn,0)$, for some block length $n$ to be specified and $k \geq 1$.  In general we take $n$ sufficiently large, and then take $k$ large, depending on $n$; we tacitly take $n$ to be even, throughout.  The \emph{simple skeleton} of such a path is $\{(jn,x^{(jn)}):0 \leq j \leq k\}$.  The number of possible simple skeletons is at most $(2n)^{dk}$, so if we have a probability associated to each skeleton which is bounded by some quantity $e^{-r_nk}$, then in order for the corresponding bound $(2n)^{dk}e^{-r_nk}$ on the sum to be small, 
we should have $r_n$ at least of order $\log n$.  This principle, in a different context, underlies the $\log n$ factor which appears in \eqref{cnbound}.  In \eqref{rate2} we essentially want to replace this $\log n$ with $\log \log n$.  Since $(2n)^{dk}$ is unacceptably large for this, it requires entropy reduction---the replacement of a sum over all simple skeletons with the sum over a small subclass which can be shown to approximate the full class of simple skeletons appropriately.  This entropy reduction is the main theme of our proof.

Our main technical tool is the following.  It is proved in \cite{BR08} for undirected FPP, but the proof for the directed case is the same.  The result there carries the additional hypothesis that $t \leq m$, but that is readily removed---the proof is in Section \ref{S:lemmas}.

\begin{theorem} \label{BR} 
  Suppose $\nu$ is nearly gamma and $\int e^{tx}\ \nu(dx) < \infty$ for some $t>0$.  Then there exist $C_1,C_2>0$ such that for 
  all $m \geq 2$ and $(m,x) \in \bbL^d$ with $|x|_1 \leq m$,
  \begin{equation} \label{expbound}
  P\left( \big|T((0,0),(m,x)) - ET((0,0),(m,x)) \big| > t\left(\frac{m}{\log m} \right)^{1/2} \right) \leq C_1 e^{-C_2t}.
  \end{equation}
\end{theorem}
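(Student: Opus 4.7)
The bound \eqref{expbound} in the range $t\le m$ is exactly the Bena\"im--Rossignol inequality from \cite{BR08}. The proof there relies only on the i.i.d.\ structure of the weights, the nearly gamma property of $\nu$ (which enters through a Gaussian log--Sobolev inequality applied via $F^{-1}\circ\Phi$), and the combinatorial fact that a geodesic uses at most $m$ edges---all of which transfer verbatim to the directed setting, where in fact every directed path between $(0,0)$ and $(m,x)$ uses exactly $m$ bonds. I would therefore take the $t\le m$ case as given by \cite{BR08} and focus on extending it to $t>m$.

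For $t>m$ I would use a direct Chernoff bound based on a path-dominance observation. Since $|x|_1\le m$ and $(m,x)\in\bbL^{d+1}$, a directed path $\gamma_0$ from $(0,0)$ to $(m,x)$ with exactly $m$ bonds exists, so
\[
T((0,0),(m,x))\;\le\; Y\;:=\;\sum_{b\in\gamma_0}\omega_b,
\]
a sum of $m$ i.i.d.\ copies of $\omega\sim\nu$. Fix $\theta_0>0$ with $\psi(\theta_0):=\log Ee^{\theta_0\omega}<\infty$, guaranteed by the exponential moment hypothesis. Markov's inequality yields
\[
P(T>u)\;\le\; P(Y>u)\;\le\;\exp\bigl(-\theta_0 u + m\psi(\theta_0)\bigr),\qquad u>0.
\]

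Set $s:=t\sqrt{m/\log m}$, so that $s\ge t>m$ since $\sqrt{m/\log m}\ge 1$ for $m\ge 2$. For the upper tail, $ET\ge 0$ gives $P(T-ET>s)\le P(T>s)$, and applying the displayed bound with $u=s$ produces $\exp(-\theta_0 s + m\psi(\theta_0))$. Once $m$ is large enough that $\sqrt{m/\log m}\ge 2\psi(\theta_0)/\theta_0$, the inequality $t>m$ implies $m\psi(\theta_0)\le\theta_0 s/2$, so the exponent is at most $-\theta_0 s/2\le -\theta_0 t/2$ and the upper tail is at most $e^{-\theta_0 t/2}$; for the finitely many remaining small values of $m$, $m\psi(\theta_0)$ is bounded by a constant that is absorbed into $C_1$. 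For the lower tail, $T\ge 0$ forces $\{T-ET<-s\}\subseteq\{ET>s\}$, and combined with $ET\le mE\omega$ and $s\ge t>m$ this can only occur when $\sqrt{m/\log m}<E\omega$, confining both $m$ and $t$ to a bounded range on which the trivial bound $\le 1$ is again absorbed into $C_1$. There is no substantive obstacle: once the hard regime $t\le m$ is conceded to \cite{BR08}, the deterministic domination $T\le Y$ together with the exponential moment hypothesis handles the rest, with the only care being the uniform treatment of small $m$.
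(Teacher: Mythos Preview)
Your proposal is correct and follows essentially the same approach as the paper: both defer the range $t\le m$ to \cite{BR08}, dispose of the lower tail for $t>m$ via $T\ge 0$ and $ET\le mE\omega$, and control the upper tail by dominating $T$ by the i.i.d.\ sum $Y$ along a fixed path together with an exponential-moment (Chernoff/large-deviation) bound. The only cosmetic difference is that the paper phrases the upper-tail estimate through the Cram\'er rate function $I$ and a linear lower bound $I(x)\ge 2C_4x$ for $x\ge 1/2$, whereas you use the raw Chernoff bound at a fixed tilt $\theta_0$---these are equivalent formulations.
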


If the exponential bound \eqref{expbound} could be improved to some scale $c_m \ll (m/\log m)^{1/2}$, then the rate on the right side of \eqref{rate2} could improved to $O(c_n\log\log n)$, with the proof virtually unchanged.

Let $H_m=(\{m\} \times \ZZ^d) \cap \bbL^d$.
A routine extension of Theorem \ref{BR} to sums of passage times is as follows.  We postpone the proof to Section \ref{S:lemmas}.

\begin{lemma} \label{sums}
Let $\nu,C_1,C_2$ be as in Theorem \ref{BR}, let $n_{\max} \geq 1$ and let $0 \leq s_1<t_1 \leq s_2 < t_2 < \dots \leq s_r < t_r$ with $t_j-s_j \leq n_{\max}$ for all $j \leq r$.  For each $j \leq r$ let $(s_j,y^{(j)}) \in H_{s_j}$ and $(t_j,z^{(j)}) \in H_{t_j}$, and let $T_j = T((s_j,y^{(j)}),(t_j,z^{(j)}))$.  Then for $C_3=C_2/(C_1+1)$, for $a>0$,
\begin{equation}
  P\left( \sum_{j=1}^r |T_j - ET_j| > 2a\right) \leq 2^{r+1} \exp\left( -C_3a\left( \frac{\log n_{\max}}{n_{\max}} \right)^{1/2} \right).
  \end{equation}
\end{lemma}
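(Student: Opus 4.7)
My proof plan is to apply Markov's inequality to the moment generating function of $\sum_j |T_j - ET_j|$, viewed as a sum of independent subexponential random variables on the common scale $\sigma := (n_{\max}/\log n_{\max})^{1/2}$. The three ingredients are independence (from disjointness of the time slabs), a uniform subexponential tail on each summand (from Theorem \ref{BR}), and a Chernoff parameter chosen so each MGF is bounded by $2$.

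First, observe that the bond set underlying $T_j$ consists of bonds $\langle (i,\cdot),(i+1,\cdot)\rangle$ with $s_j \le i \le t_j - 1$, and the condition $t_j \le s_{j+1}$ makes these sets pairwise disjoint, so $T_1,\ldots,T_r$ are mutually independent. Translation invariance next identifies $T_j$ in distribution with $T((0,0),(t_j - s_j,z^{(j)}-y^{(j)}))$, and directedness forces $|z^{(j)}-y^{(j)}|_1 \le t_j - s_j$. Applying Theorem \ref{BR}, and using that $m \mapsto m/\log m$ is non-decreasing for $m \ge 3$, yields uniformly in $j$
\[
  P(|T_j - ET_j| > s) \le C_1 \exp(-C_2 s / \sigma), \qquad s \ge 0.
\]
A layer-cake computation then gives $E[e^{\lambda |T_j - ET_j|}] \le 1 + C_1\lambda\sigma/(C_2 - \lambda\sigma)$ for $0 \le \lambda\sigma < C_2$, and choosing $\lambda = C_3/\sigma$ with $C_3 := C_2/(C_1+1)$ makes each MGF at most $2$. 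Markov's inequality combined with independence then delivers
\[
  P\!\left(\sum_{j=1}^r |T_j - ET_j| > 2a\right) \le e^{-2\lambda a}\prod_{j=1}^r E[e^{\lambda|T_j - ET_j|}] \le 2^r \exp(-2C_3 a/\sigma),
\]
which is slightly better than the stated $2^{r+1}\exp(-C_3 a/\sigma)$; the slack is what absorbs the bookkeeping described below.

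The one step requiring care is the passage to a uniform tail scale $\sigma$: Theorem \ref{BR} requires $m \ge 2$, and $m \mapsto m/\log m$ is not monotone all the way down to $m=2$ (it has its minimum near $m=e$). For the non-monotone range $m \in \{2,3\}$ the substitution $\sigma_{t_j - s_j} \le \sigma$ costs at most an absolute multiplicative constant, absorbed into $C_1$ and $C_2$. Blocks with $t_j = s_j$ contribute nothing; blocks with $t_j - s_j = 1$ reduce to a single bond weight whose exponential moment is finite by the hypothesis $\int e^{tx}\,\nu(dx) < \infty$, so a tail of the required form holds after enlarging $C_1$ and shrinking $C_2$. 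These constant adjustments do not affect the structure of the Chernoff argument, and no other obstacle arises.
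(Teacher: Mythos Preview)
Your argument is correct and follows essentially the same route as the paper's proof: a Chernoff/Markov bound combined with a layer-cake computation of the moment generating function, with the parameter $\lambda=C_3(\log n_{\max}/n_{\max})^{1/2}$ chosen so each factor is at most~$2$. The only cosmetic difference is that the paper splits $|T_j-ET_j|$ into its positive and negative parts and union-bounds (whence the extra factor of $2$ in $2^{r+1}$), whereas you work with the absolute value directly; your handling of the small-$m$ edge cases is also more explicit than the paper's.
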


For $(m,x) \in \bbL^d$ define the excess mean $s(m,x)$ by
\[
  s(m,x) = ET((0,0),(m,x)) - m\mu.
  \]
Nonnegativity of $s$ follows from \eqref{mudef}, symmetry and subadditivity:  
\[
  s(m,x) = \frac{s(m,x) + s(m,-x)}{2} \geq \frac{s(2m,0)}{2} \geq 0,
  \]
and clearly $s(n,\cdot)$ is invariant under reflections across axes and under permutations of coordinates.  Let
\begin{equation} \label{slowlyvar}
  \psi(m) = \frac{C_3(\log m)^{1/2}}{\log \log m}, \quad \theta(m) = (\log m)^{3/2}, \quad \text{and} \quad \varphi(m) = \lfloor (\log m)^2 \rfloor.
  \end{equation}
For our designated block length $n$, for $x \in \ZZ^d$ with $(n,x) \in \bbL^d$, we say the transverse increment $x$ is \emph{excessive} if $s(n,x) > n^{1/2}\theta(n)$, and \emph{unexcessive} otherwise.  Note the dependence on $n$ is suppressed in this terminology.  From \cite{Al93} we know that there exists $C_0$ such that
\begin{equation} \label{sbound}
  s(m,0) \leq C_0m^{1/2}\log m \quad \text{for all even } m \geq 2,
  \end{equation}
from which it follows readily that provided $n$ is large, all $x$ with $|x| \leq n^{1/2}\log n$ are unexcessive.  (As with Theorem \ref{BR}, \eqref{sbound} is stated in \cite{Al93} for undirected FPP, but the proof for the directed case is the same.)  Let
\[
  h_n = \max\{|x|_\infty: x \text{ is unexcessive}\},
  \]
and let $x^*$ be an unexcessive site with $|x^*|=h_n$ and $x_1^* = |x^*|_\infty$.  We wish to coarse-grain on scale $u_n = 2\lfloor h_n/2\varphi(n) \rfloor$; note $u_n$ is an even integer.
A \emph{coarse-grained} (or \emph{CG}) \emph{point} is a point of form $(jn,x^{(jn)})$ with $j \geq 0$ and $x^{(jn)} \in u_n\ZZ^d$.
A \emph{coarse-grained} (or \emph{CG}) \emph{skeleton} is a simple skeleton $\{(jn,x^{(jn)}):0 \leq j \leq k\}$ consisting entirely of CG points.  
  
By a \emph{CG path} we mean a path from $(0,0)$ to $(kn,0)$ for which the simple skeleton is a CG skeleton.  We proceed as follows.  Given the geodesic $\Gamma$ from $(0,0)$ to $(kn,0)$, we would like to reroute $\Gamma$ into a CG path $\tG$ by changing it only within distance $n_1 \leq 6dn/\varphi(n)$ of each hyperplane $H_{jn}$, and in such a way that with high probability the passage times of $\Gamma$ and $\tG$ are close.  (Here $n_1$ is an even integer to be specified, cf.~Lemma \ref{cutpaste}.)  Once this is done, we would hope to achieve entropy reduction by effectively considering only CG paths.  What we actually do is slightly different---$\tG$ is not necessarily a true CG path but rather a sequence of segments each (with certain exceptions) having CG points as endpoints,
with one segment not required to connect to the next one.  But the effect is the same.

To this end, given a site $w=(jn\pm n_1,y^{(jn\pm n_1)}) \in H_{jn\pm n_1}$, let $z^{(jn)}$ be the site in $u_n\ZZ^d$ closest to $y^{(jn\pm n_1)}$ in $\ell^1$ norm (breaking ties by some arbitrary rule), and let $\pi_{jn}(w) = (jn,z^{(jn)})$, which may be viewed as the projection into $H_{jn}$ of the CG approximation to $w$ within the hyperplane $H_{jn\pm n_1}$.  Given the geodesic $\Gamma=\{(i,x^{(i)})\}$ from $(0,0)$ to $(kn,0)$, define points
\[
  d_j = d_j(\Gamma) = (jn,x^{(jn)}), \quad 0 \leq j \leq k,
  \]
\[
  e_j = (jn+n_1,x^{(jn+n_1)}), \quad 0 \leq j \leq k-1,
  \]
\[
  f_j = (jn-n_1,x^{(jn-n_1)}), \quad 1 \leq j \leq k.
  \]
We say a \emph{sidestep} occurs in block $j$ if either
\[
  |x^{((j-1)n+n_1)} - x^{((j-1)n)}|_\infty > h_n \quad \text{or} \quad |x^{(jn)} - x^{(jn-n_1)}|_\infty > h_n.
  \]
Let 
\[
  \mE_{ex} = \mE_{ex}(\Gamma) = \{1 \leq j \leq k:  x^{(jn)} - x^{((j-1)n)} \text{ is excessive}\},
  \]
\[
  \mE_{side} = \mE_{side}(\Gamma) = \{1 \leq j \leq k:  j \notin \mE_{ex} \text{ and a sidestep occurs in block } j\},
  \]
\[
  \mE = \mE_{ex} \cup \mE_{side}
  \]
and let
\[
  e_{j-1}' = \pi_{(j-1)n}(e_{j-1}), \quad  f_j' = \pi_{jn}(f_j), \quad j \notin \mE.
  \]
%Note that $\mE$ only depends on the simple skeleton of $\Gamma$.
%For $j \in \mE$ we instead simply set $e_{j-1}'=d_{j-1}, f_j' = d_j$.
Define the tuples
\[
  \mR_j = \mR_j(\Gamma) = \begin{cases} (d_{j-1},d_j) &\text{if } j \in \mE_{ex},\\ (d_{j-1},e_{j-1},f_j,d_j) &\text{if } j \in \mE_{side},\\
    (e_{j-1}',f_j') &\text{if } j \notin \mE, \end{cases}
  \]
and define the \emph{CG-approximate skeleton} of $\Gamma$ to be
\[
  S_{CG}(\Gamma) = \{ \mR_j: 1 \leq j \leq k\}.
  \]
Note $\mE_{side}(\Gamma)$ is a  function of $S_{CG}(\Gamma)$.
Let $\mC$ denote the class of all possible CG-approximate skeletons of paths from $(0,0)$ to $(kn,0)$, and for $B \subset \{1,\dots,k\}$ let 
$\mC_B$ denote the class of all CG-approximate skeletons in $\mC$ with $\mE=B$.
$S_{CG}(\Gamma)$ is thus a sequence of tuples of sites; in each tuple the first and last sites will be used as the endpoints of a path which approximates one of the $k$ segments of $\Gamma$.  

For a CG-approximate skeleton in $\mC_B$, if $\mR_1,\dots,\mR_{j-1}$ are specified and $j \in B$, then there are at most $(2h_nu_n^{-1}+1)^{2d} \leq (3\varphi(n))^{2d}$ possible values of $\mR_j$; if $j \notin B$ there are at most $(2n)^{4d}$.  
It follows that the number of CG-approximate skeletons satisfies 
\begin{equation} \label{CB}
  |\mC_B| \leq (3\varphi(n))^{2d(k-|B|)}(2n)^{4d|B|}.
  \end{equation}
Note that the factor $\varphi(n)$ in place of $n$ in \eqref{CB} represents the entropy reduction discussed above.  
For each $j \notin \mE$ let $g_j(\Gamma)$ be the path from $e_{j-1}'$ to $f_j'$, via $e_{j-1}$ and $f_j$, obtained 
from $\Gamma$ by replacing the segment of $\Gamma$ from $d_{j-1}$ to $e_{j-1}$ with the geodesic from $e_{j-1}'$ to $e_{j-1}$, and replacing the segment of $\Gamma$ from $f_j$ to $d_j$ with the geodesic from $f_j$ to $f_j'$.  
For $j \in \mE$ let $g_j(\Gamma)$ be the segment of $\Gamma$ from $d_{j-1}$ to $d_j$.  
Then define the collection of paths
\[
  \tG = \{g_j(\Gamma): 1 \leq j \leq k\}.
  \]
Note that for $j \notin \mE$, $g_j(\Gamma)$ has CG points as endpoints.  We define the passage time in the natural way:
\[
  T(\tG) = \sum_{j=1}^k T(g_j(\Gamma)).
  \]

Let $b_{nk} = \lfloor \frac{k\log\log n}{\log n} \rfloor$.  Taking $k$ sufficiently large (depending on $n$), we have
\begin{align} \label{twocases}
  \half &< P\left( T(\Gamma) < kn\mu + \frac{64dkn^{1/2}}{\psi(n)} \right) \notag \\
  &\leq P\left( T(\Gamma) < kn\mu + \frac{64dkn^{1/2}}{\psi(n)}, |\mE(\Gamma)| > b_{nk} \right) +
     P\left( T(\tG) - T(\Gamma) > \frac{64dkn^{1/2}}{\psi(n)} \right) \notag \\
  &\qquad + P\left( T(\tG) < kn\mu + \frac{128dkn^{1/2}}{\psi(n)}, |\mE(\Gamma)| \leq b_{nk} \right),
\end{align}
where the first inequality follows from the definition of $\mu$.
We will show that the first two probabilities on the right side are small, and then, since the third probability cannot also be small, we will see that we must have 
\[
  ET(v_{j-1}',w_j') < n\mu + \frac{256dn^{1/2}}{\psi(n)} \quad \text{for some } j \leq n,v_{j-1}' \in H_{(j-1)n}, w_j' \in H_{jn},
  \]
which in turn leads easily to \eqref{rate2}.  For the second probability on the right side of \eqref{twocases} we have
\begin{align} \label{skeldecomp}
  &P\left( T(\tG) - T(\Gamma) > \frac{64dkn^{1/2}}{\psi(n)} \right) \notag \\
  &\leq \sum_{B \subset \{1,\dots,k\}} \sum_{ \{R_j\} \in \mC_B } 
    P\left( T(\tG) - T(\Gamma) > \frac{64dkn^{1/2}}{\psi(n)}, \mE(\Gamma)=B, S_{CG}(\Gamma) = \{R_j\} \right).
\end{align}
Given $B \subset \{1,\dots,k\}$ and $\{R_j\} \in \mC_B$, for some $v_{j-1}',w_j',p_{j-1},v_{j-1},w_j,p_j$ we can express $\{R_j\}$ as
\[
  R_j = \begin{cases} (v_{j-1}',w_j'), &\text{if } j \notin B, \\ (p_{j-1},p_j) &\text{if $j \in B$ and $R_j$ is a 2-tuple}, \\
    (p_{j-1},v_{j-1},w_j,p_j) &\text{if $j \in B$ and $R_j$ is a 4-tuple}.
    \end{cases}
  \]
Then let $\mI_B(\{R_j\})$ be the set of all tuple collections $\{(p_{j-1},v_{j-1},w_j,p_j):j \notin B\}$ compatible with $\{R_j\}$ and having the following properties:
\[
  p_{j-1} \in H_{(j-1)n},\quad |p_{j-1} - v_{j-1}'|_\infty \leq 2h_n,
  \]
\[
  v_{j-1} \in H_{(j-1)n+n_1},\quad \pi_{(j-1)n}(v_{j-1}) = v_{j-1}',
\]
\[
  w_j \in H_{jn-n_1},\quad \pi_{jn}(w_j) = w_j',
  \]
\[
  p_j \in H_{jn}, \quad |p_j - w_j|_\infty \leq 2h_n.
  \]
Here by ``compatible with $\{R_j\}$'' we mean that the choice of $\{R_j\}$ specifies values $p_i$ for certain $i$
(specifically, for $i=j$ and $i=j-1$ for each $j \in B$); the tuple collections in  
$\mI_B(\{R_j\})$ must use these same values $p_i$.
Note that $p_{j-1},v_{j-1},w_j,p_j$ are possible values of the variables $d_{j-1},e_{j-1},f_j,d_j$, respectively, and
\begin{equation} \label{size}
  |\mI_B(\{R_j\})| \leq (4h_n+1)^{4d|B|} \leq n^{4d|B|}.
\end{equation}
Then let
\begin{align} \label{timedef}
  T_{skel}(\{R_j\}) &= \sum_{j \notin B} T(v_{j-1}',w_j') + \sum_{\substack{j \in B \\ R_j \text{ a 2-tuple}}} T(p_{j-1},p_j) \notag \\
  &\qquad + \sum_{\substack{j \in B \\ R_j \text{ a 4-tuple}}} \big[ T(p_{j-1},v_{j-1}) + T(v_{j-1},w_j) + T(w_j,p_j) \big].
\end{align}
Then
\begin{align} \label{skeldecomp2}
 P&\left( T(\tG) - T(\Gamma) > \frac{64dkn^{1/2}}{\psi(n)}, \mE(\Gamma)=B, S_{CG}(\Gamma) = \{R_j\} \right) \notag \\
 &\leq \sum_{ \substack{ \{(p_{j-1},v_{j-1},w_j,p_j):j \notin B\}\\ \in\ \mI_B(\{R_j\}) } }
   \Bigg\{ P\left( \sum_{j \notin B} \left[ T(v_{j-1}',v_{j-1}) - T(p_{j-1},v_{j-1})  \right] > \frac{32dkn^{1/2}}{\psi(n)} \right) \notag \\
 &\hskip 4.5cm + P\left( \sum_{j \notin B} \left[ T(w_j,w_j') - T(w_j,p_j))  \right] > \frac{32dkn^{1/2}}{\psi(n)} \right) \Bigg\}.
\end{align}

It may be noted that we have modified $\Gamma$ into $\tG$ so that we need only sum over CG-approximate skeletons, rather than over all simple skeletons, and we thereby achieved entropy reduction, but now in \eqref{skeldecomp2}, the number of terms in the sum effectively increases the entropy back to its original level.  However we have the advantage that all the path lengths involved in \eqref{skeldecomp2} are at most $n_1$, not $n$ as in the simple skeleton, so Lemma \ref{sums} will give a better bound, which will be sufficient to overcome the additional entropy.

To bound the first probability on the right side of \eqref{skeldecomp2}, we have
\begin{align} \label{leftend}
  P&\left( \sum_{j \notin B} \left[ T(v_{j-1}',v_{j-1}) - T(p_{j-1},v_{j-1})  \right] > \frac{32dkn^{1/2}}{\psi(n)} \right) \notag \\
  &\leq P\left( \sum_{j \notin B} T(v_{j-1}',v_{j-1}) > (k-|B|)n_1\mu + \frac{16dkn^{1/2}}{\psi(n)} \right) \notag \\
  &\qquad\qquad + P\left( \sum_{j \notin B} T(p_{j-1},v_{j-1}) < (k-|B|)n_1\mu -  \frac{16dkn^{1/2}}{\psi(n)} \right) \notag \\
  &\leq P\left( \sum_{j \notin B} T(\pi_{(j-1)n}(v_{j-1}),v_{j-1}) > (k-|B|)n_1\mu + \frac{16dkn^{1/2}}{\psi(n)} \right) \notag \\
  &\qquad \qquad + P\left( \sum_{j \notin B} \left[ T(p_{j-1},v_{j-1}) - ET(p_{j-1},v_{j-1}) \right] < -\frac{16dkn^{1/2}}{\psi(n)} \right).
\end{align}
To bound the first probability on the right side of \eqref{leftend}, we need an upper bound for $ET(\pi_{(j-1)n}(v_{j-1}),v_{j-1})$, supplied by the next lemma.  We postpone the proof to Section \ref{S:lemmas}.

\begin{lemma} \label{cutpaste}
Provided $n$ is sufficiently large, there exists an even integer $n_1 \leq 6dn/\varphi(n)$ such that for every $v \in H_{n_1}$ we have 
\[
  ET(\pi_0(v),v) \leq n_1\mu + \frac{8dn^{1/2}}{\psi(n)}.
  \]
\end{lemma}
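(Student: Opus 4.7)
The plan is to find an even integer $n_1 \leq 6dn/\varphi(n)$ for which $s(n_1,y-z) \leq 8dn^{1/2}/\psi(n)$ holds uniformly in $v=(n_1,y)\in H_{n_1}$; by definition of $\pi_0$ we have $|y-z|_\infty \leq u_n/2$, and by translation invariance $ET(\pi_0(v),v) = n_1\mu + s(n_1,y-z)$, so it suffices to exhibit an $n_1$ in the allowed range with
\[
  \max\{s(n_1,x) : x\in\ZZd,\ |x|_\infty \leq u_n/2\} \leq \frac{8dn^{1/2}}{\psi(n)}.
\]
By the symmetry of $s$ under reflections across axes and under coordinate permutations, combined with subadditivity in the time coordinate applied to the splittings $x = \sum_{i=1}^d w_i e_i$ and $n_1 = \sum_{i=1}^d (n_1/d)$, this reduces to bounding $s(m, w e_1)$ for $0 \leq w \leq u_n/2$ and $m$ of order $n_1/d$, provided the path-existence condition $m \geq u_n/2$ is satisfied.

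The key input is the unexcessive point $x^* = h_n e_1$ (WLOG by symmetry), for which $s(n,x^*) \leq n^{1/2}\theta(n) = n^{1/2}(\log n)^{3/2}$.  I would subdivide $[0,n]$ into $K=n/n_1$ blocks of length $n_1$ and consider the geodesic $\gamma^*$ from $(0,0)$ to $(n, x^*)$ with hyperplane crossings $(jn_1, Y_j)$ of $H_{jn_1}$.  Additivity of passage times along $\gamma^*$ gives
\[
  \sum_{j=0}^{K-1} ET\bigl((jn_1, Y_j),((j+1)n_1, Y_{j+1})\bigr) = n\mu + s(n,x^*),
\]
so the mean per-segment excess over $n_1\mu$ is at most $s(n,x^*)/K \leq n^{1/2}(\log n)^{3/2}/K$.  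Choosing $K$ of order $\varphi(n) = (\log n)^2$, and hence $n_1$ of order $n/\varphi(n)$, this average is of order $n^{1/2}/(\log n)^{1/2}$, which already matches the target $8dn^{1/2}/\psi(n) = O(n^{1/2}\log\log n/(\log n)^{1/2})$ up to the $\log\log n$ slack available in $\psi(n)$.

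Upgrading this averaged bound to a uniform bound over $|w| \leq u_n/2$ is the main obstacle.  My plan is to combine Theorem~\ref{BR} (to transfer concentration from typical to arbitrary endpoints in the coarse-graining box), \eqref{sbound} (to control residual small-scale corrections), and a pigeonhole argument over the $\Theta(\varphi(n))$ candidate even integers $n_1 \in [du_n/2,\,6dn/\varphi(n)]$, the lower endpoint being required for the relevant paths to exist.  The averaging from the previous paragraph only directly controls \emph{random} endpoints along the geodesic to $x^*$, not every transverse endpoint simultaneously; moreover in the regime $h_n \sim n$ (so $u_n \sim n/\varphi(n)$) the bound \eqref{sbound} by itself gives only $s(n_1, 0) = O(n^{1/2})$, short of the target by the factor $(\log n)^{1/2}/\log\log n$.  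Thus the sub-$n^{1/2}$ improvement over \eqref{sbound} must genuinely come from the unexcessivity of $x^*$ coupled with Theorem~\ref{BR}, and not from \eqref{sbound} applied to $s(n_1,0)$ in isolation.
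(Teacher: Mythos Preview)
Your proposal has a genuine gap, one you yourself flag but do not close: upgrading an averaged bound to the required uniform bound over all $|x|_\infty \leq u_n/2$.

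First, a technical issue with your averaging step. The hyperplane crossings $Y_j$ of the geodesic $\gamma^*$ are random and correlated with the passage times, so from $\sum_j T((jn_1,Y_j),((j+1)n_1,Y_{j+1})) = T((0,0),(n,x^*))$ you only get that the expectations (with random endpoints) sum to $n\mu + s(n,x^*)$. This does \emph{not} yield $\sum_j s(n_1, Y_{j+1}-Y_j) \leq s(n,x^*)$ for any realization, nor a deterministic displacement $x$ with small $s(n_1,x)$. To extract a deterministic conclusion you would have to fix a skeleton, apply concentration, and union-bound over skeletons---which is what the paper does, but aimed at a stronger target.

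That stronger target is the missing idea. The paper does not look for a single good segment; it looks for a \emph{clean} segment: a short climbing segment of a path to $(n,x^*)$ in which \emph{every} sub-increment $(t-s, x^{(t)}-x^{(s)})$ is fast, i.e.\ has excess below $n^{1/2}/\psi(n)$. The existence of such a segment is the content of Lemma~\ref{existence}, proved by contradiction: if no short climbing segment were clean, one could augment the climbing skeleton so that at least $\varphi(n)/2$ increments are slow, forcing the skeleton mean to exceed $n\mu$ by at least $\tfrac{1}{2}\varphi(n)\, n^{1/2}/\psi(n) \gg n^{1/2}\theta(n)$; Lemma~\ref{sums} then makes $T(\gamma^*) < n\mu + 2n^{1/2}\theta(n)$ improbable for every skeleton, contradicting the unexcessivity of $x^*$.

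Once a clean segment $\alpha^*$ from $(0,0)$ to $(m^*,y^*)$ with $y_1^* = u_n$ and $m^* \leq 2n/\varphi(n)$ is in hand, uniformity follows by a symmetrization trick: the reflections $\zeta\circ\alpha^*$, $\xi^j\circ\alpha^*$, $\eta\circ\alpha^*$ are also clean, and by stopping the $j$th symmetrized pair at the time its $j$th transverse coordinate hits $w_j$, then padding with a ``horizontal'' pair, one writes any $(2dm^*,w)$ with $0 \leq w_j \leq u_n$ as a sum of $4d$ fast increments. One more fast increment absorbs the rounding $|\hat v - w| \leq d$, giving $s(n_1,\hat v) \leq (4d+1)n^{1/2}/\psi(n)$ for $n_1 = 2d(m^*+1)$. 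This is exactly the step your plan lacks: neither Theorem~\ref{BR} nor \eqref{sbound} compares $s(n_1,x)$ across different $x$, whereas the clean-path construction gives control over every sub-increment simultaneously, which is what makes the symmetrization work.
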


It follows from Lemmas \ref{sums} (with $n_{\max}=n_1$) and \ref{cutpaste} that provided $n$ is large,
\begin{align} \label{insertmean}
  P&\left( \sum_{j \notin B} T(\pi_{(j-1)n}(v_{j-1}),v_{j-1}) > (k-|B|)n_1\mu + \frac{16dkn^{1/2}}{\psi(n)} \right) \notag \\
  &\leq P\left( \sum_{j \notin B} \left[ T(\pi_{(j-1)n}(v_{j-1}),v_{j-1}) - ET(\pi_{(j-1)n}(v_{j-1}),v_{j-1}) \right] 
    > \frac{8dkn^{1/2}}{\psi(n)} \right) \notag \\
  &\leq 2^{k+1} \exp\left( - \frac{4C_3dkn^{1/2}}{\psi(n)} \left( \frac{\log n_1}{n_1} \right)^{1/2} \right) \notag \\
  &\leq \exp\left( -k(d\varphi(n))^{1/2}\log\log n \right),
  \end{align}
and the same bound holds for the second probability in \eqref{leftend}, so \eqref{leftend} yields
\[
  P\left( \sum_{j \notin B} \left[ T(v_{j-1}',v_{j-1}) - T(p_{j-1},v_{j-1})  \right] > \frac{32dkn^{1/2}}{\psi(n)} \right)
    \leq 2\exp\left( -k(d\varphi(n))^{1/2}\log\log n \right).
    \]
This bound also holds for the last probability in \eqref{skeldecomp2}, which with \eqref{size} and \eqref{skeldecomp2} shows that
\begin{align*}
  P&\left( T(\tG) - T(\Gamma) > \frac{64dkn^{1/2}}{\psi(n)}, \mE(\Gamma)=B, S_{CG}(\Gamma) = \{R_j\} \right) \notag \\
  &\qquad \leq 4n^{4dk}\exp\left( -k(d\varphi(n))^{1/2}\log\log n \right).
  \end{align*}
Then from \eqref{CB} and \eqref{skeldecomp}, provided $n$ is large, 
\begin{align} \label{diffspeed}
  P\left( T(\tG) - T(\Gamma) > \frac{64dkn^{1/2}}{\psi(n)} \right) 
    &\leq 4 \cdot 2^k (2n)^{4dk} n^{4dk}\exp\left( -k(d\varphi(n))^{1/2}\log\log n \right) < \frac{1}{8}.
\end{align}

We turn now to the first probability on the right side of \eqref{twocases}.  Here we use the non-coarse-grained analogs of the $\mR_j$, given by
\[
  \mV_j = \mV_j(\Gamma) = \begin{cases} (d_{j-1},d_j) &\text{if } j \notin \mE_{side},\\ (d_{j-1},e_{j-1},f_j,d_j) &\text{if } j \in \mE_{side}, \end{cases}
  \]
and define
the \emph{augmented skeleton} of $\Gamma$ to be
\[
  \mS_{aug}(\Gamma) = \{\mV_j, 1 \leq j \leq k\}.  %\bigcup_{j=1}^k \mV_j(\Gamma).
  \]
Note that $\mE_{side}(\Gamma)$ and $\mE_{ex}(\Gamma)$ are functions of $\mS_{aug}(\Gamma)$.
Let $\mC_{aug}^+$ denote the class of all augmented skeletons for which $|\mE| \geq b_{nk}$.  For a given $\{V_j,1 \leq j \leq k\} \in \mC_{aug}^+$, for some $E_{side} \subset \{1,\dots,k\}$ and $p_j,v_j,w_j$ we can write $\{V_j,1 \leq j \leq k\} \in \mC_{aug}^+$ as
\[
  V_j = \begin{cases} (p_{j-1},p_j), &j \notin E_{side},\\ (p_{j-1},v_{j-1},w_j,p_j), &j \in E_{side}, \end{cases}
  \]
and we then define 
\[
  T_{skel}(\{V_j\}) = \sum_{j \notin E_{side}} T(p_{j-1},p_j) + \sum_{j \in E_{side}} \big[ T(p_{j-1},v_{j-1}) + T(v_{j-1},w_j) + T(w_j,p_j) \big],
  \]
where we take $p_0=0$.  (Note this definition is consistent with \eqref{timedef}.)  For $j \in E_{side}$, a sidestep occurs either from $p_{j-1}$ to $v_{j-1}$ or from $w_j$ to $p_j$.  In the former case, writing $v_{j-1}$ as $((j-1)n+n_1,y)$ for some $y \in \ZZ^d$, let $b_j = (jn,y)$.  From the definitions of ``sidestep'' and $h_n$ we have using \eqref{sbound} that
\begin{align} \label{sidestep}
  n\mu + n^{1/2}\theta(n) \leq ET(p_{j-1},b_j) &\leq ET(p_{j-1},v_{j-1}) + ET(v_{j-1},b_j) \notag \\
  &\leq ET(p_{j-1},v_{j-1}) + (n-n_1)\mu + C_0n^{1/2}\log n,
  \end{align}
so assuming $n$ is large,
\[
  ET(p_{j-1},v_{j-1}) \geq n_1\mu + \half n^{1/2}\theta(n),
  \]
and hence
\[
  E\big[ T(p_{j-1},v_{j-1}) + T(v_{j-1},w_j) + T(w_j,p_j) \big] \geq n\mu + \half n^{1/2}\theta(n).
  \]
The last bound holds similarly if the sidestep instead occurs from $w_j$ to $p_j$, so
\[
  ET_{skel}(\{V_j\}) \geq kn\mu + \half b_{nk}n^{1/2}\theta(n).
  \]
Therefore by Lemma \ref{sums} (with $n_{\max}=n$) and \eqref{slowlyvar}, provided $n$ is large,
\begin{align} \label{skeldecomp3}
  P&\left( T(\Gamma) < kn\mu + \frac{64dkn^{1/2}}{\psi(n)}, |\mE(\Gamma)| > b_{nk} \right) \notag \\
  &\leq \sum_{\{V_j\} \in \mC_{aug}^+} P\left( T_{skel}(\{V_j\}) - ET_{skel}(\{V_j\}) < - \half b_{nk}n^{1/2}\theta(n) \right) \notag \\
  &\leq (2n)^{3dk} 2^{k+1} \exp\left( -\frac{C_3k\theta(n)\log\log n}{16(\log n)^{1/2}} \right) \notag \\
  &< \frac{1}{8}.
\end{align}

Now we consider the third probability on the right side of \eqref{twocases}.  From \eqref{twocases}, \eqref{diffspeed} and \eqref{skeldecomp3} we have
\begin{align} \label{skeldecomp4}
  \frac{1}{4} &< \left( T(\tG) < kn\mu + \frac{128dkn^{1/2}}{\psi(n)}, |\mE(\Gamma)| \leq b_{nk} \right) \notag \\
  &\leq \sum_{\substack{B \subset \{1,\dots,k\} \\ |B| \leq b_{nk}}} \sum_{ \{R_j\} \in \mC_B } 
    P\left( T_{skel}(\{R_j\}) \leq kn\mu + \frac{128dkn^{1/2}}{\psi(n)} \right).
\end{align}
From here we proceed by contradiction.  Suppose that 
\begin{equation} \label{hypoth}
  s(n,x) \geq \frac{256dn^{1/2}}{\psi(n)} \quad \text{ for all } x \in H_n.
  \end{equation}
Then for the probability on the right of \eqref{skeldecomp4} we have from Lemma \ref{sums} (with $n_{\max}=n$) that
\begin{align} \label{tooslow}
  P&\left( T_{skel}(\{R_j\}) \leq kn\mu + \frac{128dkn^{1/2}}{\psi(n)} \right) \notag \\
  &\leq P\left( T_{skel}(\{R_j\}) - ET_{skel}(\{R_j\}) \leq -\frac{128dkn^{1/2}}{\psi(n)} \right) \notag \\
  &\leq 2^{3k+1} e^{ -64dk\log\log n},
  \end{align}
and then using \eqref{CB} and \eqref{skeldecomp4},
\begin{align} \label{skeldecomp5}
  \frac{1}{4} &< 2^{4k+1} (3\varphi(n))^{2dk}(2n)^{4db_{nk}} e^{-64dk\log\log n} \leq e^{-32dk\log\log n},
\end{align}
which is clearly false.  Therefore the inequality in \eqref{hypoth} must fail for some $x$, and then
\[
  s(2n,0) \leq s(n,x) + s(n,-x) \leq \frac{512dn^{1/2}}{\psi(n)}
  \]
provided $n$ is sufficiently large, which completes the proof of Theorem \ref{main}.  Note that \eqref{skeldecomp5} shows the purpose of entropy reduction---if $\varphi(n)$ were replaced by $n$, we would have to have $\log n$ in place of $\log\log n$ in the definition of $\psi(n)$.

\section{Proofs of Lemmas and Theorem \ref{BR}} \label{S:lemmas}
\begin{proof}[Proof of Theorem \ref{BR}]
As mentioned in the introduction, the theorem is proved in \cite{BR08} under the additional hypothesis that $t \leq m$, so we assume $t>m$.  Let $\mu_1=E(\omega_b)$, let $\gamma_{(m,x)}$ be a (nonrandom) path from $(0,0)$ to $(m,x)$ and write $T_{(m,x)}$ for $T((0,0),(m,x))$.  Let $m_1,m_2$ satisfy
\[
  \frac{m_1}{\log m_1} = 4\mu_1^2, \qquad m_2 = 2(m_1 \log m_1)^{1/2} \mu_1.
\]
We can always choose $C_1,C_2$ so that $C_1e^{-C_2t} \geq 1$ for all $t \leq m_2$, so we may assume $t>m_2$.  If $m \geq m_1$, then since $t>m$ we have
\begin{equation} \label{mu1bound}
  2\mu_1m \leq t\left( \frac{m}{\log m} \right)^{1/2},
\end{equation}
while if $m < m_1$ then \eqref{mu1bound} follows from $t>m_2$.  Either way, since $T_{(m,x)}\geq 0$ and $ET_{(m,x)} \leq m\mu_1$, we then have
\[
  T_{(m,x)} - ET_{(m,x)} \geq -t\left( \frac{m}{\log m} \right)^{1/2}.
\]
Further, by \eqref{mu1bound},
\begin{align}
  T_{(m,x)} - ET_{(m,x)} > t\left( \frac{m}{\log m} \right)^{1/2} &\implies T(\gamma_{(m,x)}) \geq T_{(m,x)} \geq 2\mu_1 m \notag \\
  &\implies T(\gamma_{(m,x)}) - \mu_1 m \geq \half T(\gamma_{(m,x)}) \geq \frac{t}{2} \left( \frac{m}{\log m} \right)^{1/2},
\end{align}
so, letting $I$ denote the rate function of the variable $\omega_b - E\omega_b$,
\begin{align} \label{LD}
  P\left( T_{(m,x)} - ET_{(m,x)} > t\left( \frac{m}{\log m} \right)^{1/2} \right) \leq \exp\left( -I\left( \frac{t}{2(m\log m)^{1/2}} \right) m \right).
\end{align}
Since $I$ is convex, there exists $C_4>0$ such that $I(x) \geq 2C_4x$ for all $x \geq 1/2$, and we have
\[
  \frac{t}{2(m\log m)^{1/2}} \geq \half \left( \frac{m}{\log m} \right)^{1/2} \geq \half,
\]
so by \eqref{LD},
\[
  P\left( T_{(m,x)} - ET_{(m,x)} > t\left( \frac{m}{\log m} \right)^{1/2} \right) \leq e^{-C_4t},
\]
which completes the proof.
\end{proof}

\begin{proof}[Proof of Lemma \ref{sums}]
We have
\begin{align} \label{markovbound}
  P\left( \sum_{j=1}^r (T_j - ET_j)_+ > a \right) 
    &\leq e^{- \lambda a} \prod_j E\exp\left( \lambda (ET_j - T_j)_+ \right) .
  \end{align}
By Theorem \ref{BR}, for each $j$, letting $r_j = t_j-s_j$, for $\lambda \leq C_3\left( \frac{\log n_{\max}}{n_{\max}} \right)^{1/2}$,
\begin{align} \label{expbound2}
  E\exp\left( \lambda (ET_j - T_j)_+ \right) &\leq 1 + \int_1^\infty P\left( ET_j - T_j > \frac{\log t}{\lambda} \right)\ dt \notag \\
  &\leq 1 + \int_1^\infty C_1\exp\left(- \frac{C_2\log t}{\lambda} \left(\frac{\log r_j}{r_j}\right)^{1/2} \right)\ dt \notag \\
  &= 1 + \frac{C_1}{\frac{C_2}{\lambda} \left(\frac{\log r_j}{r_j}\right)^{1/2} -1}.
\end{align}
Letting $\lambda = C_3\left( \frac{\log n_{\max}}{n_{\max}} \right)^{1/2}$, since $r_j \leq n_{\max}$ we obtain $E\exp\left( \lambda (ET_j - T_j)_+ \right) \leq 2$ and then by \eqref{markovbound},
\[
  P\left( \sum_{j=1}^r (T_j - ET_j)_+ > a \right) \leq 2^r \exp\left( -C_3a\left( \frac{\log n_{\max}}{n_{\max}} \right)^{1/2} \right).
  \]
The same bound holds for $P\left( \sum_{j=1}^r (ET_j - T_j)_+ > a \right)$.
\end{proof}
 
For the proof of Lemma \ref{cutpaste} we need the following.  
We say $(m,x)$ is \emph{slow} if 
\[
  s(m,x) \geq \frac{n^{1/2}}{\psi(n)},
  \]
and \emph{fast} otherwise.  A path $((l,x^{(0)}),(l+1,x^{(1)}),\dots,(l+m,x^{(m)}))$ is \emph{clean} if every increment $(t-s,x^{(t)}-x^{(s)})$ with $0 \leq s < t \leq m$ is fast.

Given a path $\gamma = \{(m,x^{(m)})\}$ from $(0,0)$ to $(n,x^*)$, let 
\[
  \tau_j = \tau_j(\gamma) = \min\{m: x_1^{(m)} = ju_n\}, \quad 1 \leq j \leq \varphi(n).
  \]
The \emph{climbing skeleton} of $\gamma$ is $C(\gamma) = \{(\tau_j,x^{(\tau_j)}): 1 \leq j \leq \varphi(n)\}$.
A \emph{climbing segment} of $\gamma$ is a segment of $\gamma$ from $(\tau_{j-1},x^{(\tau_{j-1})})$ to $(\tau_j,x^{(\tau_j)})$ for some $j$.  A climbing segment is \emph{short} if $\tau_j - \tau_{j-1} \leq 2n/\varphi(n)$, and \emph{long} otherwise.  (Note $n/\varphi(n)$ is the average length of the climbing segments in $\gamma$.)  Since the total length of $\gamma$ is $n$, there can be at most $\varphi(n)/2$ long climbing segments in $\gamma$, so there are at least $\varphi(n)/2$ short ones.  
\[
  \mJ_s(\gamma) = \{j \leq \varphi(n): \text{ the $j$th climbing segment of $\gamma$ is short} \},
  \]
\[
  \mJ_l(\gamma) = \{j \leq \varphi(n): \text{ the $j$th climbing segment of $\gamma$ is long} \},
  \]
\[
  J_l(\gamma) = \left( \cup_{j \in \mJ_l(\gamma)} (\tau_{j-1},\tau_j) \right) \cap \left\lfloor \frac{2n}{\varphi(n)} \right\rfloor \ZZ.
  \]
If no short climbing segment of $\gamma$ is clean, then for each $j \in \mJ_s(\gamma)$ there exist $\alpha_j(\gamma)<\beta_j(\gamma)$ in $[\tau_{j-1},\tau_j]$ for which the increment of $\gamma$ from $(\alpha_j,x^{(\alpha_j)})$ to $(\beta_j,x^{(\beta_j)})$ is slow.  (If $\alpha_j,\beta_j$ are not unique we make a choice by some arbitrary rule.)  
We can reorder the values $\{\tau_j, j \leq \varphi(n)\} \cup \{\alpha_j,\beta_j: j \in \mJ_s(\gamma)\} \cup J_l(\gamma)$ into a single sequence $\{\sigma_j, 1 \leq j \leq N(\gamma)\}$ with $\varphi(n) \leq N(\gamma) \leq 4\varphi(n)$, such that at least $\varphi(n)/2$ of the increments $(\sigma_j - \sigma_{j-1},x^{(\sigma_j)} - x^{(\sigma_{j-1})}), j \leq N(\gamma),$ are slow.  The \emph{augmented climbing skeleton} of $\gamma$ is then the sequence 
$A(\gamma) = \{(\sigma_j,x^{(\sigma_j)}): 1 \leq j \leq N(\gamma)\}$.

\begin{lemma} \label{existence}
Provided $n$ is large, there exists a path from $(0,0)$ to $(n,x^*)$ containing a short climbing segment which is clean.
\end{lemma}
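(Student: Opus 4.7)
The plan is to argue by contradiction using the probabilistic method, exploiting that the existence of a clean short climbing segment is a purely deterministic property (since ``slow'' and ``fast'' are defined via the deterministic function $s$). Assume no path from $(0,0)$ to $(n,x^*)$ has a clean short climbing segment. Then for every realization of the passage times, the geodesic $\Gamma$ from $(0,0)$ to $(n,x^*)$ fails to have one, so its augmented climbing skeleton $A(\Gamma)$ contains at least $\varphi(n)/2$ slow consecutive increments.

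First I would decompose $T(\Gamma)=\sum_i T_i$ along the pieces of $A(\Gamma)$, extending the definition slightly if necessary so that every piece (including those touching $(0,0)$ and $(n,x^*)$) has length at most $n_{\max}:=2n/\varphi(n)$; this is already ensured for the interior pieces by the $J_l$-chopping of long climbing segments, and the trailing piece can be chopped by adding at most $\varphi(n)/2$ extra points without changing the order of the entropy. Each $T_i$ is a sub-geodesic passage time, and by the slow bound each slow piece contributes at least $n^{1/2}/\psi(n)$ to its expected excess, so $\sum_i ET_i\geq n\mu+(\varphi(n)/2)n^{1/2}/\psi(n)$. On the other hand, since $x^*$ is unexcessive, $ET(\Gamma)\leq n\mu+n^{1/2}\theta(n)$, and Theorem \ref{BR} applied at $t=(\log n)^2$ gives $T(\Gamma)\leq n\mu+2n^{1/2}\theta(n)$ with probability $1-o(1)$, since the fluctuation scale $(n/\log n)^{1/2}$ is much smaller than $n^{1/2}\theta(n)$.

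Next I would union-bound over combinatorial augmented skeletons $S$ having at least $\varphi(n)/2$ slow sub-increments. Writing $T(S)$ for the sum of geodesic passage times along the sub-segments of $S$, Lemma \ref{sums} with $n_{\max}=2n/\varphi(n)$ and deviation $a=(\varphi(n)/4)n^{1/2}/\psi(n)$ yields
\[
P\bigl(T(S)-ET(S)<-a\bigr)\leq 2^{4\varphi(n)+1}\exp\bigl(-C_3a(\log n_{\max}/n_{\max})^{1/2}\bigr),
\]
whose exponent simplifies to order $(\log n)^3\log\log n$, easily dominating the $\exp(O((\log n)^2))$ from the $2^{4\varphi(n)+1}$ factor. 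The number of combinatorial skeletons $S$ is at most $[(n+1)(2n+1)^d]^{4\varphi(n)}=\exp(O((\log n)^3))$, and the choice of $a$ absorbs the $2n^{1/2}\theta(n)$ slack because $\varphi(n)/(\psi(n)\theta(n))$ grows like $\log\log n$. Thus the union bound totals $\exp(O((\log n)^3)-c(\log n)^3\log\log n)=o(1)$, so with probability $1-o(1)$ every $S$ in our class satisfies $T(S)>n\mu+2n^{1/2}\theta(n)$. Combined with the concentration in the previous paragraph, we get positive probability that both $T(\Gamma)\leq n\mu+2n^{1/2}\theta(n)$ and no $S$ in our class achieves a small $T(S)$; but the hypothesis forces $A(\Gamma)$ into this class with $T(A(\Gamma))=T(\Gamma)$, a contradiction.

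The main obstacle is the tight entropy/concentration balance. The naive skeleton entropy is $\exp(O((\log n)^3))$, and if one used $n_{\max}=n$ in Lemma \ref{sums}, the concentration exponent would be only $(\log n)^2\log\log n$, too weak to beat the entropy. The argument succeeds precisely because the definition of the augmented climbing skeleton---specifically the $J_l$-chopping of long climbing segments---keeps $n_{\max}=O(n/\varphi(n))$, which boosts the concentration exponent by an additional factor of $\log n$ and leaves the $\log\log n$ margin of slack needed to dominate the entropy cost.
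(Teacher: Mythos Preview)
Your proposal is correct and follows essentially the same route as the paper: assume the geodesic to $(n,x^*)$ has no clean short climbing segment, use that $x^*$ is unexcessive together with Theorem~\ref{BR} to place $T(\Gamma)$ below $n\mu+2n^{1/2}\theta(n)$ with positive probability, then union-bound over augmented climbing skeletons via Lemma~\ref{sums} with $n_{\max}=2n/\varphi(n)$, so that the concentration exponent of order $\varphi(n)^{3/2}\log\log n=(\log n)^3\log\log n$ beats the skeleton entropy of order $(\log n)^3$. Your explicit attention to chopping the trailing piece beyond $\tau_{\varphi(n)}$ is a detail the paper glosses over, but otherwise the arguments coincide.
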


Note that Lemma \ref{existence} is a purely deterministic statement, since the property of being clean does not involve the configuration $\{\omega_b\}$.  

Translating the segment obtained in Lemma \ref{existence} to begin at the origin, we obtain a path $\alpha^*$ from $(0,0)$ to some site $(m^*,y^*)$, with the following properties:
\begin{equation} \label{properties}
  m^* \leq \frac{2n}{\varphi(n)}, \quad y_1^* = u_n \quad \text{and $\alpha^*$ is clean}.
  \end{equation}
  
\begin{proof}[Proof of Lemma \ref{existence}]
Let $D$ denote the event that the geodesic $\gamma^*$ from $(0,0)$ to $(n,x^*)$ does not contain a short climbing segment which is clean.
We will show that $P(D)<1$, which is sufficient to prove the lemma.

Since $x^*$ is unexcessive, provided $n$ is large it follows from Theorem \ref{BR} that
\begin{equation} \label{bigprob}
  P\left(T^* < n\mu + 2n^{1/2}\theta(n)\right) > \half.
  \end{equation}
Let $\mA^*$ be the set of all augmented climbing skeletons of paths from $(0,0)$ to $(n,x^*)$, so $|\mA^*| \leq (2n)^{4(d+1)\varphi(n)}$ and 
\begin{align} \label{decomp}
  P&\left(D \cap \{T^* < n\mu + 2n^{1/2}\theta(n)\} \right) \\
  &\qquad \leq \sum_{\{(\sigma_j,x^{(\sigma_j)})\} \in \mA^*}
    P\left(D \cap \{ A(\gamma^*) = \{(\sigma_j,x^{(\sigma_j)})\} \} \cap \{T^* < n\mu + 2n^{1/2}\theta(n)\} \right) \notag \\
  &\qquad \leq \sum_{\{(\sigma_j,x^{(\sigma_j)})\} \in \mA^*}
    P\left( \sum_j T((\sigma_{j-1},x^{(\sigma_{j-1})}),(\sigma_j,x^{(\sigma_j)})) < n\mu + 2n^{1/2}\theta(n) \right). \notag
\end{align}
Here the sum over $j$ has at most $4\varphi(n)$ terms.
In each $\{(\sigma_j,x^{(\sigma_j)})\} \in \mA^*$ there are at least $\varphi(n)/2$ slow increments, so we have 
\[
  \sum_j ET((\sigma_{j-1},x^{(\sigma_{j-1})}),(\sigma_j,x^{(\sigma_j)})) > n\mu + \frac{\varphi(n)}{2} \frac{n^{1/2}}{\psi(n)}.
  \]
Then by \eqref{slowlyvar} and Lemma \ref{sums} (with $n_{\max} = 2n/\varphi(n)$), letting $T_j = T((\sigma_{j-1},x^{(\sigma_{j-1})}),(\sigma_j,x^{(\sigma_j)}))$, provided $n$ is large,
\begin{align} \label{sumbound}
  P\left( \sum_j T_j < n\mu + 2n^{1/2}\theta(n) \right) 
    &\leq P\left( \sum_j \left( T_j - ET_j \right) < -\frac{n^{1/2}\varphi(n)}{4\psi(n)} \right) \notag \notag \\
  &\leq 2^{4\varphi(n)+1} \exp\left(-\frac{C_3\varphi(n)^{3/2}(\log n)^{1/2}}{16 \psi(n)} \right) \notag \\
  &= 2^{4\varphi(n)+1} \exp\left(-\frac{1}{16}\varphi(n)^{3/2}\log\log n \right).
  \end{align}
Then by \eqref{bigprob} and \eqref{decomp}, provided $n$ is large,
\begin{align} \label{Dcap}
 P\left(D \cap \{T^* < n\mu + 2n^{1/2}\theta(n)\} \right) &\leq 
   (2n)^{3(d+1)\varphi(n)} 2^{4\varphi(n)+1} \exp\left(-\frac{1}{16}\varphi(n)^{3/2}\log\log n \right) \notag \\
 &\leq  \exp\left(-\frac{1}{32}\varphi(n)^{3/2}\log\log n \right) \notag \\
 &< P\left(T^* < n\mu + 2n^{1/2}\theta(n) \right),
\end{align}
which shows $P(D)<1$, as desired.
\end{proof}

\begin{proof}[Proof of Lemma \ref{cutpaste}]
We use the path $\alpha^*= \{(i,a^{(i)}), i \leq m^*\}$ satisfying \eqref{properties}.  
Write $v \in H_{n_1}$ as $(n_1,\hat{v})$ with $\hat{v} \in \ZZ^d$.
We may assume that $\pi_0(v)=0$, and then from symmetry that $0 \leq \hat{v}_l \leq u_n/2$ for all $l \leq d$.
Let $w \in \ZZ^d$ have all coordinates even, with $|w_l - \hat{v}_l| \leq 1$ for all $l \leq d$.
Define $\zeta$ and $\xi_j$ on $\ZZ^d$ by
\[
  \zeta(i,x_1,\dots,x_d) = (i,x_1,-x_2,\dots,-x_d), 
  \]
\[
  \xi^j(i,x_1,\dots,x_d) = (i,x_j,x_2,\dots,x_{j-1},x_1,x_{j+1},\dots,x_d), \quad 1 \leq j \leq d.
  \]
Then $\alpha^*$ and $\zeta \circ \alpha^*$ are clean paths, the latter the reflection of the former through the plane $\{x:x_2=\cdots=x_d=0\}$, with both paths ending at sites with first transverse coordinate $u_n$.  When we add the two paths together we obtain a ``path'' which stays in that plane:
\[
  \alpha^*(i) + \zeta \circ \alpha^*(i) = (2i,2a_1^{(i)},0,\dots,0),  \quad i \leq m^*,
  \]
while composing both maps with $\xi^j$ interchanges the roles of the first and $j$th transverse coordinates:
\begin{equation} \label{symmpath}
  \xi^j \circ \alpha^*(i) + \xi^j \circ \zeta \circ \alpha^*(i) = (2i,0,\dots,0,2a_1^{(i)},0,\dots,0),  \quad i \leq m^*, j \leq d.
  \end{equation}
We put ``path'' in quotes because \eqref{symmpath} is not a true path, as consecutive sites are not adjacent; nonetheless we refer to it as the \emph{jth symmetrized path}.
Since $a_1^{(m^*)} = u_n$ and $w_j \in [0,u_n+1]$ is even for all $j \leq d$, we can define $i_j = \min\{i \leq m^*: 2a_1^{(i)} = w_j\}$, which is the time when the $j$th symmetrized path reaches height $w_j$ in the $j$th transverse coordinate.  We then have
\begin{equation} \label{cancel}
  \xi^j \circ \alpha^*(i_j) + \xi^j \circ \zeta \circ \alpha^*(i_j) = (2i_j,0,\dots,0,w_j,0,\dots,0),  \quad j \leq d.
  \end{equation}
Now define 
\[
  \eta(i,x) = (i,-x), \quad (i,x) \in \ZZ^{d+1}. 
  \]
Then $\eta \circ \alpha^*$ is clean, and 
\begin{equation} \label{horiz}
  \alpha^*(m^*-i_j) + \eta \circ \alpha^*(m^*-i_j) = (2(m^*-i_j),0,\dots,0),  \quad j \leq d.
  \end{equation}
If we append the ``horizontal path'' $\alpha^* + \eta \circ \alpha^*$ with endpoint \eqref{horiz} to the $j$th symmetrized path with endpoint \eqref{cancel}, we obtain a ``path'' of fixed length $2m^*$.  Summing the increments of all the symmetrized and horizontal paths we obtain
\[
  \sum_{j=1}^d \left[ \xi^j \circ \alpha^*(i_j) + \xi^j \circ \zeta \circ \alpha^*(i_j) + \alpha^*(m^*-i_j) + \eta \circ \alpha^*(m^*-i_j) \right]
    = (2dm^*,w),
  \]
which expresses $(2dm^*,w)$ as a sum of $4d$ fast increments.  Since $|\hat{v}-w| \leq d$, $(2d,\hat{v}-w)$ is fast provided $n$ is large, so $(2d(m^*+1),\hat{v})$ is given as a sum of $4d+1$ fast increments.  It follows that 
\[
  s(2d(m^*+1),\hat{v}) \leq \frac{(4d+1)n^{1/2}}{\psi(n)}.
  \]
Taking $n_1=2d(m^*+1)$ completes the proof.
\end{proof}

In Lemmas \ref{cutpaste} and \ref{existence} we have effectively bounded the difference between a subadditive quantity, in this case of form $ET((0,0),(n_1,u))$, and its asymptotic approximation $n_1\mu$ by expressing $(n_1,u)$ as a sum of a bounded number of increments of a clean path.  Earlier uses of a similar idea in other contexts appear in \cite{Al90} and \cite{Al01}.

\end{document}